\newcommand{\bx}{\boldsymbol{x}}
\DeclareMathOperator{\tr}{tr}
\newcommand{\abs}[1]{\lvert#1\rvert}
\newcommand{\Abs}[1]{\left\lvert#1\right\rvert}
\newcommand{\norm}[1]{\lVert#1\rVert}
\newcommand{\Norm}[1]{\left\lVert#1\right\rVert}
\newcommand{\dx}{\,d\bx}
\newcommand{\FHF}{{(F_K')}^T \Abs{H (\bx)} F_K'}
\newcommand{\FMF}{{(F_K')}^T M_K F_K'}
\newcommand{\FRF}{{(F_K')}^T R_K F_K'}
\newcommand{\FRFinv}{{(F_K')}^{-T} R_K^{-1} {(F_K')}^{-1}}
\newcommand{\RKinvH}{R_K^{-1} H (\bx)}
\newcommand{\tK}{{\tilde{K}}}
\newcommand{\cO}{\mathcal{O}}
\newcommand{\R}{\mathbb{R}}
\newcommand{\cT}{\mathcal{T}}
\newtheorem{theorem}{Theorem}[section]
\newtheorem{example}{Example}[section]
\providecommand{\bamg}{\texttt{BAMG}}
\newenvironment{keywords}%
   {\begin{trivlist}\item[]{\bfseries\sffamily Key words:}~}
   {\end{trivlist}}
\newenvironment{AMS}%
   {\begin{trivlist}\item[]{\bfseries\sffamily AMS subject classifications:}~}
   {\end{trivlist}}
\begin{document}

\title{How a~nonconvergent recovered Hessian works in~mesh adaptation%
   \thanks{%
      Supported in part by 
      the DFG under Grant KA\,3215/1-2
      and
      the NSF under Grant DMS-1115118.%
      }%
}

\author{Lennard Kamenski%
   \thanks{%
      Weierstrass Institute, Berlin, Germany
      (\href{mailto:kamenski@wias-berlin.de}{\nolinkurl{kamenski@wias-berlin.de}}).%
   }
   \and
   Weizhang Huang%
   \thanks{%
      University of Kansas, Department of Mathematics, Lawrence, KS 66045, USA
      (\href{mailto:whuang@ku.edu}{\nolinkurl{whuang@ku.edu}}).%
   }
}

\begin{titlepage}
\maketitle
\begin{abstract}
Hessian recovery has been commonly used in mesh adaptation for obtaining the required magnitude and direction information of the solution error.
Unfortunately, a recovered Hessian from a linear finite element approximation is nonconvergent in general as the mesh is refined.
It has been observed numerically that adaptive meshes based on such a nonconvergent recovered Hessian can nevertheless lead to an optimal error in the finite element approximation.
This also explains why Hessian recovery is still widely used despite its nonconvergence.
In this paper we develop an error bound for the linear finite element solution of a general boundary value problem under a mild assumption on the closeness of the recovered Hessian to the exact one.
Numerical results show that this closeness assumption is satisfied by the recovered Hessian obtained with commonly used Hessian recovery methods.
Moreover, it is shown that the finite element error changes gradually with the closeness of the recovered Hessian.
This provides an explanation on how a nonconvergent recovered Hessian works in mesh adaptation.

\begin{keywords}
   Hessian recovery, mesh adaptation, anisotropic mesh, finite element, convergence analysis, error estimate
\end{keywords}

\begin{AMS}
   65N50, 65N30
\end{AMS}
\end{abstract}

\end{titlepage}

\section{Introduction}
\label{sect:introduction}

Gradient and Hessian recovery has been commonly used in mesh adaptation for the numerical solution of partial differential equations (PDEs); e.g.,\ see~\cite{AinOde00,BabStr01,HuaRus11,Tang07,ZhaNag05,ZieZhu92,ZieZhu92a}.
The use typically involves the approximation of solution derivatives based on a computed solution defined on the current mesh (recovery), the generation of a new mesh using the recovered derivatives, and the solution of the physical PDE on the new mesh.
These steps are often repeated several times until a suitable mesh and a numerical solution defined thereon are obtained.
As the mesh is refined, a sequence of adaptive meshes, derivative approximations, and numerical solutions results.
A theoretical and also practical question is whether this sequence of numerical solutions converges to the exact solution.
Naturally, this question is linked to the convergence of the recovered derivatives used to generate the meshes. 
It is known that recovered gradient through the least squares fitting~\cite{ZieZhu92,ZieZhu92a} or polynomial preserving techniques~\cite{ZhaNag05} is convergent for uniform or quasi-uniform meshes~\cite{ZhaNag05,ZhaZhu95} and superconvergent for mildly structured meshes~\cite{XuZha04} as well for a type of adaptive mesh~\cite{WuZha07}.

For the Hessian, it has been observed that, unfortunately, a convergent recovery cannot be obtained from linear finite element approximations for general nonuniform meshes~\cite{AgoLipVas10,Kam09,PicAlaBorGeo11}, although Hessian recovery is known to converge when the numerical solution exhibits superconvergence or supercloseness for some special meshes~\cite{BanXu03,BanXu03a,Ova07}.

On the other hand, numerical experiments also show that the numerical solution obtained with an adaptive mesh generated using a nonconvergent recovered Hessian is often not only convergent but also has an error comparable to that obtained with the exact analytical Hessian.
To demonstrate this, we consider a Dirichlet boundary value problem (BVP) for the Poisson equation
\begin{equation}
   \begin{cases} 
      -\Delta u = f,  &\text{in $\Omega= (0,1)\times(0,1)$}, \\
              u = g,  &\text{on $\partial\Omega$},
   \end{cases} 
   \label{eq:bvp}
\end{equation}
where $f$ and $g$ are chosen such that the exact solution of the BVP is given by
\begin{equation}
   u(x,y) = x^2 + 25 y^2.
\end{equation}
Two Hessian recovery methods, QLS (quadratic least squares fitting) and WF (weak formulation) are used (see~\cref{sect:recovery:methods} for the description of these and other Hessian recovery techniques).
\Cref{fig:x2y2:25:intro} shows the error in recovered Hessian and the linear finite element solution with exact and recovered Hessian.
One can see that the finite element error is convergent and almost undistinguishable for the exact and approximate Hessian (\cref{fig:x2y2:25:solution}) whereas the error of the Hessian recovery remains $\cO(1)$ (\cref{fig:x2y2:25:recovery}).
Obviously, this indicates that a convergent recovered Hessian is not necessary for the purpose of mesh adaptation.
Of course, a badly recovered Hessian does not serve the purpose either.

\begin{figure}[t]
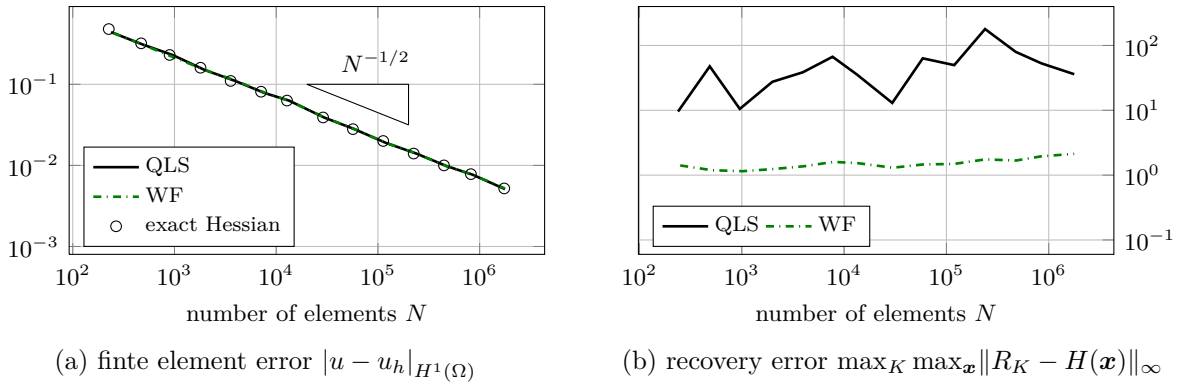

   \begin{subfigure}[t]{0.5\textwidth}
      \centering
      \includegraphics[clip]{{{1211.2877f-intro-fe-error}}}
      \caption{finte element error $\Abs{u-u_h}_{H^1(\Omega)}$\label{fig:x2y2:25:solution}}
   \end{subfigure}%
   \begin{subfigure}[t]{0.5\textwidth}
      \centering
      \includegraphics[clip]{{{1211.2877f-intro-h-error}}}
      \caption{recovery error $\max_K \max_{\bx} \norm{R_{K} - H(\bx)}_\infty$\label{fig:x2y2:25:recovery}}
   \end{subfigure}
   \caption{Finite element and Hessian recovery errors
      as a function of $N$\label{fig:x2y2:25:intro}}
\end{figure}

How accurate should a recovered Hessian be for the purpose of mesh adaptation? 
This issue has been studied by Agouzal et al.~\cite{AgoLipVas99} and Vassilevski and Lipnikov~\cite{VasLip99}.
In particular~\cite[Theorem~3.2]{AgoLipVas99}, they show that a mesh based on an approximation $R$ of the Hessian $H$ is quasi-optimal if there exist small (with respect to one) positive numbers $\varepsilon$ and $\delta$ such that
\begin{align}
  \max_{\bx \in \omega_i} \Norm{ H(\bx) - H_{\omega_i} }_\infty & \leq \delta \lambda_{\min} \bigl(R(\bx_i)\bigr),
   \label{eq:AgLiVa99:1} \\
   \Norm{ R(\bx_i) - H_{\omega_i} }_\infty &\leq \varepsilon \lambda_{\min} \bigl(R(\bx_i)\bigr) 
   \label{eq:AgLiVa99:2}
\end{align}
hold for any mesh vertex $\bx_i$ and its patch  $\omega_i$, where $H_{\omega_i}$ is the Hessian at a point in $\omega_i$ where $\Abs{\det H(\bx) }$ attains its maximum and $\lambda_{\min}(\cdot)$ denotes the minimum eigenvalue of a matrix.
Notice that \cref{eq:AgLiVa99:2} does not require $R$ to converge to $H$ as the mesh is refined.
Instead, it requires the eigenvalues of $R^{-1} H$ to be around one (cf.~\cref{sect:analysis:1}).
Unfortunately, it is still too restrictive to be satisfied by most examples we tested; see~\cref{sect:examples}.
Thus, the work~\cite{AgoLipVas99,VasLip99} does not give a full explanation why a nonconvergent recovered Hessian works in mesh adaptation.

The objective of the paper is to present a study on this issue.
To be specific, we consider a BVP and its linear finite element solution with adaptive anisotropic meshes generated from a recovered Hessian.
We adopt the $M$-uniform mesh approach~\cite{Hua07,HuaRus11} to view any adaptive mesh as a uniform one in some metric depending on the computed solution.
An advantage of the approach is that the relation between the recovered Hessian and an adaptive anisotropic mesh generated using it can be fully characterized through the so-called alignment and equidistribution conditions (see \cref{eq:equi,eq:ali} in~\cref{sect:analysis:1}).
This characterization plays a crucial role in the development of a bound for the $H^1$ semi-norm of the finite element error.
The bound converges at a first order rate in terms of the average element diameter, $N^{-\frac{1}{d}}$, where $N$ is the number of elements and $d$ is the dimension of the physical domain. 
Moreover, the bound is valid under a condition on the closeness of the recovered Hessian to the exact one; see \cref{eq:CRs} or \cref{eq:CRplus:general,eq:CRminus:general}.
This closeness condition is much weaker than \cref{eq:AgLiVa99:2}.
Roughly speaking, \cref{eq:AgLiVa99:2} requires the eigenvalues of $R^{-1} H$ to be around one whereas the new condition only requires them to be bounded below from zero and from above.
Numerical results in~\cref{sect:examples} show that the new closeness condition is satisfied in all examples for four commonly used Hessian recovery techniques considered in this paper whereas \cref{eq:AgLiVa99:2} is satisfied only in some examples.
Furthermore, the error bound is linearly proportional to the ratio of the maximum (over the physical domain) of the largest eigenvalues of $R^{-1} H$ to the minimum of the smallest eigenvalues.
Since the ratio is a measure of the closeness of the recovered Hessian to the exact one, the dependence indicates that the finite element error changes gradually with the closeness of the recovered Hessian.
Hence, the error for the linear finite element approximation of the BVP is convergent for the considered Hessian recovery techniques and insensitive to the closeness of the recovered Hessian to the exact one.
This provides an explanation how a nonconvergent recovered Hessian works for mesh adaptation.

An outline of the paper is as follows. Convergence analysis of the linear finite element approximation is given in~\cref{sect:analysis:1,sect:general} for the cases with positive definite and general Hessian, respectively.
A brief description of four common Hessian recovery techniques is given in~\cref{sect:recovery:methods} followed by numerical examples in~\cref{sect:examples}.
Finally, \cref{sect:conclusion} contains conclusions and further comments.

\section{Convergence of~linear finite element approximation for~positive definite Hessian}
\label{sect:analysis:1}

We consider the BVP
\begin{equation}
   \begin{cases}
   \mathcal{L} u = f,  & \text{ in $\Omega$}, \\
               u = g,  & \text{ on $\partial\Omega$},
   \end{cases}
   \label{eq:bvp-2}
\end{equation}
where $\Omega$ is a polygonal or polyhedral domain of $\R^d$ ($d \ge 1$), $\mathcal{L}$ is an elliptic second-order differential operator, and $f$ and $g$ are given functions.
We are concerned with the adaptive mesh solution of this BVP using the conventional linear finite element method.
Denote a family of simplicial meshes for $\Omega$ by $\{ \cT_h\}$ and the corresponding reference element by $\hat{K}$ which is chosen to be unitary in volume.
For each mesh $\cT_h$, we denote the corresponding finite element solution by $u_h$.
C\'{e}a's lemma implies that the finite element error is bounded by the interpolation error, i.e.,
\begin{equation}
   \Abs{u-u_h}_{H^1(\Omega)} \le C \Abs{u - \Pi_h u}_{H^1(\Omega)} ,
   \label{eq:cea-1}
\end{equation}
where $C$ is a constant independent of $u$ and $\mathcal{T}_h$ and $\Pi_h$ is the nodal interpolation operator associated with the linear finite element space defined on $\mathcal{T}_h$.
Note that \cref{eq:cea-1} is valid for any mesh.

\subsection{\texorpdfstring{Quasi-$M$-uniform meshes}{Quasi-M-uniform meshes}}
In this paper we consider adaptive meshes generated based on a recovered Hessian $R$ and use the $M$-uniform mesh approach with which any adaptive mesh is viewed as a uniform one in some metric $M$ (defined in terms of $R$ in our current situation).
It is known~\cite{Hua07,HuaRus11} that such an $M$-uniform mesh satisfies the equidistribution and alignment conditions,
\begin{align}
   \Abs{K} {\det(M_K)}^{\frac{1}{2}} 
      &= \frac{1}{N} \sum_{\tK\in\cT_h} \Abs{\tK} {\det(M_{\tK})}^{\frac{1}{2}},
         \quad \forall K \in \cT_h,
   \label{eq:equi} \\
   \frac{1}{d} \tr\left( \FMF \right) 
      &= {\det\left( \FMF \right)}^{\frac{1}{d}},
      \quad \forall K \in \cT_h,
   \label{eq:ali}
\end{align}
where $N$ is the number of mesh elements, $M_K$ is an average of $M$ over $K$, $F_K \colon \hat{K} \to K$ is the affine mapping from the reference element $\hat{K}$ to a mesh element $K$, $F_K'$ is the Jacobian matrix of $F_K$ (which is constant on $K$), and $\det(\cdot)$ and $\tr(\cdot)$ denote the determinant and trace of a matrix, respectively.

In practice, it is more realistic to generate less restrictive quasi-$M$-uniform meshes which satisfy
\begin{align}
   \Abs{K} {\det(M_K)}^{\frac{1}{2}}
      & \leq  C_{eq} \frac{1}{N} \sum_{\tK\in\cT_h} 
      \Abs{\tK} {\det(M_{\tK})}^{\frac{1}{2}},
   \quad \forall K \in \cT_h, 
   \label{eq:equi:approx}
   \\
   \frac{1}{d} \tr\left( \FMF \right) 
      & \leq C_{ali} \Abs{K}^{\frac{2}{d}} {\det(M_K)}^{\frac{1}{d}},
   \quad \forall K \in \cT_h,
   \label{eq:ali:approx}
\end{align}
where $C_{eq}, C_{ali} \geq 1$ are some constants independent of $K$, $N$, and  $\cT_h$.
Numerical experiments in~\cite{Hua05a} and~\cref{sect:examples} (\cref{fig:flower:ceq,fig:flower:cali,fig:tanh:ceq,fig:tanh:cali}) show that quasi-$M$-uniform meshes with relatively small $C_{eq}$ and $C_{ali}$ can be generated in practice.
For this reason, we use quasi-$M$-uniform meshes in our analysis and numerical experiments.

We would like to point out that conditions \cref{eq:equi:approx,eq:ali:approx} with $C_{eq} = C_{ali} = 1$ imply \cref{eq:equi,eq:ali}.
Indeed, the inequality \cref{eq:ali:approx} with $C_{ali} = 1$ becomes the equality \cref{eq:ali} because the left-hand side of it (the arithmetic mean of the eigenvalues of $\FMF$) cannot be smaller than the right-hand side (the geometric mean of the eigenvalues).
Further, if $C_{eq} = 1$ then \cref{eq:equi:approx} becomes
\[
   \Abs{K} {\det(M_K)}^{\frac{1}{2}} 
      \le \frac{1}{N} \sum_{\tK\in\cT_h} 
         \Abs{\tK} {\det(M_{\tK})}^{\frac{1}{2}},
   \quad \forall K \in \cT_h.
\]
This implies
\begin{align*}
 \max_{K\in\cT_h} \Abs{K} {\det(M_K)}^{\frac{1}{2}} 
      &\leq \frac{1}{N} \sum_{K\in\cT_h} \Abs{K} {\det(M_{K})}^{\frac{1}{2}}\\
      &\leq \frac{1}{N} \left( (N-1)\max_{K\in\cT_h} \Abs{K} {\det(M_K)}^{\frac{1}{2}}
         + \min_{K\in\cT_h} \Abs{K} {\det(M_K)}^{\frac{1}{2}}
   \right)
\end{align*}
and therefore
\[
   \max_{K\in\cT_h} \Abs{K} {\det(M_K)}^{\frac{1}{2}}
      \leq \min_{K\in\cT_h} \Abs{K} {\det(M_K)}^{\frac{1}{2}},
\]
which can only be valid if all values of $\Abs{K} {\det(M_K)}^{\frac{1}{2}}$ are the same for all $K$.

\subsection{Main result}
In this section we consider a special case where the Hessian of the solution is uniformly positive definite in $\Omega$; i.e.,
\begin{equation}
   \exists \gamma > 0 \colon H(\bx) \geq \gamma I, \quad \forall \bx \in \Omega,
   \label{eq:Hpd}
\end{equation}
where the greater-than-or-equal sign means that the difference between the left-hand side and right-hand side terms is positive semidefinite. 
We also assume that the recovered Hessian $R$ is uniformly positive definite in $\Omega$.
This assumption is not essential and will be dropped for the general situation discussed in~\cref{sect:general}.

Recall from \cref{eq:cea-1} that the finite element error is bounded by the $H^1$ semi-norm of the interpolation error of the exact solution.
A metric tensor corresponding to the $H^1$ semi-norm can be defined as
\begin{equation}
   M_K = {\det(R_K)}^{- \frac{1}{d+2}} \Norm{R_K}_2^{\frac{2}{d+2}} R_K,
   \quad \forall K \in \cT_h,
\label{eq:M:H1}
\end{equation}
where $R_K$ is an average of $R$ over $K$~\cite{Hua05a}.
For this metric tensor,  mesh conditions \cref{eq:equi:approx,eq:ali:approx} become
\begin{align}
   \Abs{K} {\det(R_K)}^{\frac{1}{d+2}} \Norm{R_K}_2^{\frac{d}{d+2}}
   &\leq C_{eq} \frac{1}{N}
      \sum_\tK \abs{\tK} {\det(R_\tK)}^{\frac{1}{d+2}} \Norm{R_\tK}_2^{\frac{d}{d+2}},
   \quad \forall K \in \cT_h,
   \label{eq:equi:H1}
   \\
   \frac{1}{d} \tr\left( \FRF \right)
   &\leq C_{ali} \Abs{K}^{\frac{2}{d}} {\det(R_K)}^{\frac{1}{d}},
   \quad \forall K \in \cT_h.
   \label{eq:ali:H1}
\end{align}

Note that the alignment condition \cref{eq:ali:H1} implies the inverse alignment condition
\begin{equation}
   \frac{1}{d} \tr\left( \FRFinv \right)
      <
         {\left( \frac{d}{d-1} C_{ali}\right)}^{d-1}
         \Abs{K}^{-\frac{2}{d}} {\det(R_K)}^{-\frac{1}{d}},
   \quad \forall K \in \cT_h .
   \label{eq:ali:inverse}
\end{equation}
To show this, we denote the eigenvalues of $\FRF$ by $0 < \lambda_1 \le \cdots \le \lambda_d$ and rewrite \cref{eq:ali:H1} as
\[
   \sum_i \lambda_i 
      \le d C_{ali} {\left(\prod_i \lambda_i\right)}^{\frac{1}{d}}.
\]
Then \cref{eq:ali:inverse} follows from
\begin{align*}
   \frac{1}{d} \sum_i \lambda_i^{-1} 
   &= \prod_i \lambda_i^{-1} \cdot \frac{1}{d} 
      \sum_i \prod_{j\neq i} \lambda_j \\
   &\leq \prod_i \lambda_i^{-1}
      \cdot \frac{1}{d} \sum_i 
         {\left(\frac{\sum_{j\neq i} \lambda_j}{d-1}  \right)}^{d-1}\\
   &< \prod_i \lambda_i^{-1} 
      \cdot \frac{1}{d} \sum_i 
         {\left( \frac{ \sum_{j} \lambda_j }{d-1}\right)}^{d-1}
   = \prod_i \lambda_i^{-1} 
         {\left( \frac{\sum_{j} \lambda_j }{d-1} \right)}^{d-1}\\
   &\leq {\left( \frac{d}{d-1} C_{ali} \right)}^{d-1}
      {\left(\prod_i \lambda_i\right)}^{-\frac{1}{d}}
   .
\end{align*}

\begin{theorem}[Positive definite Hessian]
\label{thm:H1}
Assume that $H(\bx)$ and the recovered Hessian $R$ are uniformly positive definite in $\Omega$ and that $R$ satisfies
\begin{equation}
   C_{R-,K} I \leq \RKinvH \leq C_{R+, K} I,
   \quad \forall \bx \in K,
   \quad \forall K \in \cT_h
   \label{eq:CRs}
\end{equation}
where $C_{R-,K}$ and $C_{R+,K}$ are element-wise constants satisfying
\begin{equation}
   C_{R-} \le \min_{K \in \cT_h} C_{R-,K}
   \qquad \text{and} \qquad
\sqrt{\frac{1}{N} \sum_{K\in \cT_h} C_{R+,K}^2} \le C_{R+}
\label{CR+}
\end{equation}
with some mesh-independent positive constants $C_{R-}$ and $C_{R+}$.
If the solution of the BVP \cref{eq:bvp-2} is in $H^2(\Omega)$,
then for any quasi-$M$-uniform mesh associated with the metric tensor \cref{eq:M:H1} and satisfying \cref{eq:equi:approx,eq:ali:approx} the linear finite element error for the BVP is bounded by
\begin{equation}
   \Abs{u-u_h}_{H^1(\Omega)} 
   \leq  C 
      \cdot C_{ali}^{\frac{d+1}{2}} C_{eq}^{\frac{d+2}{2d}} 
      \cdot \frac{C_{R+}}{C_{R-}}
      \cdot N^{-\frac{1}{d}}
      \Norm{ {\det(H)}^{\frac{1}{d}} H }_{L^{\frac{d}{d+2}}(\Omega)}^{\frac{1}{2}}.
   \label{eq:thm:H1}
\end{equation}
\end{theorem}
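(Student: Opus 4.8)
The plan is to estimate the interpolation error elementwise and invoke C\'{e}a's bound \cref{eq:cea-1}, so that $\Abs{u-u_h}_{H^1(\Omega)}^2 \le C\sum_{K\in\cT_h}\Abs{u-\Pi_h u}_{H^1(K)}^2$. On a fixed element $K$, mapping to the reference element by $F_K$, bounding the reference-element interpolation error by $\Abs{\hat u}_{H^2(\hat K)}$ (with $\hat u = u\circ F_K$) via Bramble--Hilbert, and changing variables back so that the transformed Hessian ${(F_K')}^T H(\bx) F_K'$ appears, one obtains the standard anisotropic estimate
\[
   \Abs{u-\Pi_h u}_{H^1(K)}^2
   \le C\,\Norm{{(F_K')}^{-1}}_2^2
      \int_K \bigl[\,\tr\bigl(\FHF\bigr)\,\bigr]^2 \dx ,
\]
where the passage from the Frobenius norm of ${(F_K')}^T\Abs{H(\bx)}F_K'$ to its trace uses that this matrix is positive semidefinite, and $\Abs{H(\bx)}=H(\bx)$ by \cref{eq:Hpd}.

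The next step is to replace $H$ in the element estimate by the quantities occurring in \cref{eq:equi:H1,eq:ali:H1}. The right half of \cref{eq:CRs}, which since $R_K$ is symmetric positive definite is equivalent to $H(\bx)\le C_{R+,K}R_K$ as quadratic forms, gives $\tr(\FHF)\le C_{R+,K}\tr(\FRF)$ pointwise on $K$; as the right-hand side is constant on $K$, the integral is at most $\Abs{K}\,C_{R+,K}^2\,[\tr(\FRF)]^2$. The alignment condition \cref{eq:ali:H1} bounds $\tr(\FRF)\le d\,C_{ali}\Abs{K}^{\frac{2}{d}}{\det(R_K)}^{\frac{1}{d}}$, and the chain $\Norm{{(F_K')}^{-1}}_2^2\le\tr(\FF)\le\Norm{R_K}_2\,\tr(\FRFinv)$ together with the inverse alignment estimate \cref{eq:ali:inverse} bounds $\Norm{{(F_K')}^{-1}}_2^2$ by a $d$-dependent constant times $C_{ali}^{d-1}\Norm{R_K}_2\Abs{K}^{-\frac{2}{d}}{\det(R_K)}^{-\frac{1}{d}}$. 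Multiplying the three bounds, the powers of $C_{ali}$ add to $d+1$, and with $\sigma_K:=\Abs{K}{\det(R_K)}^{\frac{1}{d+2}}\Norm{R_K}_2^{\frac{d}{d+2}}$ one verifies the identity $\Abs{K}^{1+\frac{2}{d}}{\det(R_K)}^{\frac{1}{d}}\Norm{R_K}_2=\sigma_K^{\frac{d+2}{d}}$, so that
\[
   \Abs{u-\Pi_h u}_{H^1(K)}^2 \le C\,C_{ali}^{d+1}\,C_{R+,K}^2\,\sigma_K^{\frac{d+2}{d}} .
\]

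Summing over $K$, the equidistribution condition \cref{eq:equi:H1} reads $\sigma_K\le C_{eq}\,\sigma_h$ with $\sigma_h:=\frac{1}{N}\sum_{\tK\in\cT_h}\sigma_{\tK}$; raising it to the power $\frac{d+2}{d}$, bounding $\sum_K C_{R+,K}^2$ by $N\,C_{R+}^2$ via \cref{CR+}, and using $N\,\sigma_h^{\frac{d+2}{d}}=N^{-\frac{2}{d}}\bigl(\sum_{\tK}\sigma_{\tK}\bigr)^{\frac{d+2}{d}}$ gives
\[
   \Abs{u-\Pi_h u}_{H^1(\Omega)}^2 \le C\,C_{ali}^{d+1}\,C_{eq}^{\frac{d+2}{d}}\,C_{R+}^2\,N^{-\frac{2}{d}}
      \Bigl(\,\sum_{\tK\in\cT_h}\sigma_{\tK}\,\Bigr)^{\frac{d+2}{d}} .
\]
It remains to return from $R$ to $H$ in the sum: the left half of \cref{eq:CRs} gives $H(\bx)\ge C_{R-,\tK}R_{\tK}$ on $\tK$, hence ${\det(H(\bx))}^{\frac{1}{d}}\Norm{H(\bx)}_2\ge C_{R-,\tK}^2\,{\det(R_{\tK})}^{\frac{1}{d}}\Norm{R_{\tK}}_2$; raising to the power $\frac{d}{d+2}$, integrating over $\tK$, and using $C_{R-,\tK}\ge C_{R-}$ (by \cref{CR+}) gives $\sigma_{\tK}\le C_{R-}^{-\frac{2d}{d+2}}\int_{\tK}\bigl({\det(H)}^{\frac{1}{d}}\Norm{H}_2\bigr)^{\frac{d}{d+2}}\dx$, and summing over $\tK$ and raising to the power $\frac{d+2}{d}$ bounds $\bigl(\sum_{\tK}\sigma_{\tK}\bigr)^{\frac{d+2}{d}}$ by $C_{R-}^{-2}\Norm{{\det(H)}^{\frac{1}{d}}H}_{L^{\frac{d}{d+2}}(\Omega)}$. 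Substituting and taking square roots reproduces \cref{eq:thm:H1}.

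The elementwise interpolation bound and the algebraic identities are routine; the delicate part is the accounting of constants — extracting precisely $C_{ali}^{\frac{d+1}{2}}$ from the alignment and inverse-alignment inequalities and $C_{eq}^{\frac{d+2}{2d}}$ from equidistribution — and the \emph{two-sided} use of the closeness condition \cref{eq:CRs}: the upper constant $C_{R+}$ enters when $H$ is replaced by the recovered $R_K$ inside each element, whereas the lower constant $C_{R-}$ enters when $R$ is replaced by $H$ in the global sum, so that only the ratio $C_{R+}/C_{R-}$ survives in the final estimate.
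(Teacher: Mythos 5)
Your proposal is correct and follows essentially the same route as the paper's proof: the anisotropic interpolation bound on each element, replacement of $H$ by $R_K$ via the upper closeness constant $C_{R+,K}$, the alignment and inverse-alignment conditions to produce the quantity $\Abs{K}{\det(R_K)}^{\frac{1}{d+2}}\Norm{R_K}_2^{\frac{d}{d+2}}$ raised to the power $\frac{d+2}{d}$, equidistribution plus \cref{CR+} for the global sum, and the lower closeness constant $C_{R-}$ to return to $H$. The only cosmetic difference is that you track traces of the transformed (positive semidefinite) matrices where the paper tracks their $2$-norms; these are equivalent up to $d$-dependent constants, which both arguments absorb into $C$.
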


\begin{proof}
The nodal interpolation error of a function $u \in H^2(\Omega)$ on $K$ is bounded by
\begin{equation}
   \Abs{u - \Pi_h u}_{H^1(K)} 
      \leq C \Norm{ {(F_K')}^{-1}}_2 
         {\left( \int_K \Norm{\FHF}_2^2 \dx \right)}^{\frac{1}{2}} ,
   \label{eq:HR11:1}
\end{equation}
where $\Abs{H(\bx)} = \sqrt{ {H(\bx)}^2 }$~\cite[Theorem~5.1.5]{HuaRus11} (the interested reader is referred to, for example,~\cite{CheSunXu07,ForPer01,Hua05a,HuaSun03,Mir12} for anisotropic error estimates for interpolation with linear and higher order finite elements).
Notice that $\Abs{H(\bx)} = H(\bx)$ in the current situation (symmetric and positive definite $H(\bx)$).

Further,
\begin{align*}
   \Norm{ {(F_K')}^T H(\bx) F_K' }_2 
   &=   \Norm{ {H(\bx)}^{\frac{1}{2}} F_K' }_2^2
   =    \Norm{ {H(\bx)}^{\frac{1}{2}} R_K^{-\frac{1}{2}} R_K^{\frac{1}{2}} F_K' }_2^2
   \\
   &\le \Norm{ {H(\bx)}^{\frac{1}{2}} R_K^{-\frac{1}{2}} }_2^2 
        \Norm{  R_K^{\frac{1}{2}} F_K' }_2^2
   \\
   &=   \Norm{ R_K^{-\frac{1}{2}} H(\bx) R_K^{-\frac{1}{2}} }_2
        \Norm{ {(F_K')}^T R_K F_K' }_2
   \\
   &=   \lambda_{\max}\bigl(  R_K^{-\frac{1}{2}} H(\bx) R_K^{-\frac{1}{2}} \bigr)
        \Norm{ {(F_K')}^T R_K F_K' }_2
   \\
   &=   \lambda_{\max} \bigl( R_K^{-1} H(\bx) \bigr)
        \Norm{ {(F_K')}^T R_K F_K' }_2
   \\
   &\le \Norm{ R_K^{-1} H(\bx) }_2
        \Norm{ {(F_K')}^T R_K F_K' }_2.
\end{align*}
Similarly,
\[
   \Norm{ {(F_K')}^{-1}}_2^2 
   = \Norm{ {(F_K')}^{-1} {(F_K')}^{-T} }_2
  \le \Norm{ {(F_K')}^{-T} R_K^{-1} {(F_K')}^{-1} }_2 \Norm{R_K}_2.
\]
Thus, \cref{eq:HR11:1} yields
\[
   \Abs{u - \Pi_h u}_{H^1(K)}^2 
      \leq C \Norm{ \FRFinv }_2 \Norm{R_K}_2 
         \int_K \Norm{ \FRF }_2^2 \Norm{ R_K^{-1} H(\bx)}_2^2 \dx .
\]
Using this, \cref{eq:ali:approx}, \cref{eq:ali:inverse}, \cref{eq:CRs}, 
the fact that the trace of any $d\times d$ symmetric and positive definite matrix $A$ is equivalent to its $l^2$ norm, viz., $\Norm{A}_2  \le \tr(A) \le d \Norm{A}_2$, and absorbing powers of $d$ into the generic constant $C$, we get
\begin{align*}
   \Abs{u - \Pi_h u}_{H^1(\Omega)}^2
   &= \sum_K \Abs{u - \Pi_h u}_{H^1(K)}^2 \\
   &\leq C \sum_K C_{ali}^{d-1} 
      \Abs{K}^{-\frac{2}{d}} {\det(R_K)}^{-\frac{1}{d}}\Norm{R_K}_2
      \times \Abs{K} C_{ali}^2 \Abs{K}^{\frac{4}{d}} 
      {\det(R_K)}^{\frac{2}{d}} C_{R+,K}^2 \\
   & = C C_{ali}^{d+1} 
      \sum_K \Abs{K}^{\frac{d+2}{d}} {\det(R_K)}^{\frac{1}{d}} \Norm{R_K}_2 C_{R+,K}^2\\
   &= C C_{ali}^{d+1}  
      \sum_K {\left( \Abs{K} {\det(R_K)}^{\frac{1}{d+2}} 
         \Norm{R_K}_2^{\frac{d}{d+2}} \right)}^\frac{d+2}{d} C_{R+,K}^2.
\end{align*}
Applying \cref{eq:equi:approx} to the above result and using \cref{CR+} gives
\begin{align*}
   \Abs{u - \Pi_h u}_{H^1(\Omega)}^2
   &\leq C C_{ali}^{d+1} 
      \sum_K {\left( \frac{C_{eq}}{N} \sum_\tK \abs{\tK}
         {\det(R_\tK)}^{\frac{1}{d+2}} \Norm{R_\tK}_2^{\frac{d}{d+2}}
         \right)}^{\frac{d+2}{d}} C_{R+,K}^2 \\
   &= C C_{ali}^{d+1}  C_{eq}^{\frac{d+2}{d}} N^{-\frac{2}{d}} \left (\frac{1}{N} \sum_{K\in \cT_h} C_{R+,K}^2\right )
      {\left(\sum_\tK \abs{\tK}
         {\det(R_\tK)}^{\frac{1}{d+2}} \Norm{R_\tK}_2^{\frac{d}{d+2}}
      \right)}^{\frac{d+2}{d}}\\
   &\le C C_{ali}^{d+1}  C_{eq}^{\frac{d+2}{d}} N^{-\frac{2}{d}} C_{R+}^2
      {\left(\sum_K \abs{K}
         {\det(R_K)}^{\frac{1}{d+2}} \Norm{R_K}_2^{\frac{d}{d+2}}
      \right)}^{\frac{d+2}{d}}\\
   &= C C_{ali}^{d+1} C_{eq}^{\frac{d+2}{d}} N^{-\frac{2}{d}} C_{R+}^2
      {\left( \sum_{K} \int_K {\det(R_K)}^{\frac{1}{d+2}} 
         \Norm{R_K}_2^{\frac{d}{d+2}} \dx \right)}^{\frac{d+2}{d}} .
\end{align*}
Further, assumption~\cref{eq:CRs} implies
\begin{equation}
   \det(R_K)
   \le \det\bigl(H(\bx)\bigr) \Norm{H^{-1}(\bx) R_K}_2^d 
   \leq C_{R-}^{-d} \det\bigl(H(\bx)\bigr)
   \label{eq:detR:detH} 
\end{equation}
and
\begin{equation}
   \Norm{R_K}_2 
   = \Norm{H(\bx) H^{-1}(\bx) R_K }_2
   \le \Norm{H(\bx)}_2 \Norm{H^{-1}(\bx) R_K}_2 
   \le C_{R-}^{-1} \Norm{H(\bx)}_2 . 
   \label{eq:detR:detH-1}
\end{equation}
Thus,
\begin{align*}
   \Abs{u - \Pi_h u}_{H^1(\Omega)}^2
   &\leq C C_{ali}^{d+1} C_{R+}^2 C_{eq}^{\frac{d+2}{d}} N^{-\frac{2}{d}} 
      {\left( C_{R-}^{\frac{-2d}{d+2}} 
         \int_\Omega {\left({\det(H(\bx))}^{\frac{1}{d}} \Norm{H(\bx)}_2
      \right)}^{\frac{d}{d+2}} \dx \right)}^{\frac{d+2}{d}} \\
   &= C C_{ali}^{d+1} C_{eq}^{\frac{d+2}{d}}
      {\left(\frac{C_{R+}}{C_{R-}}\right)}^2 N^{-\frac{2}{d}} 
      {\left( \int_\Omega \Norm{ {\det(H(\bx))}^{\frac{1}{d}}
         H(\bx)}_2^{\frac{d}{d+2}} \dx \right)}^{\frac{d+2}{d}},
\end{align*}
which, together with \cref{eq:cea-1}, gives \cref{eq:thm:H1}.
\qquad
\end{proof}

\subsection{Remarks}
\Cref{thm:H1} shows how a nonconvergent recovered Hessian works in mesh adaptation.
The error bound \cref{eq:thm:H1} is linearly proportional to the ratio $C_{R+}/C_{R-}$, which is a measure for the closeness of $R$ to $H$.
Thus, the finite element error changes gradually with the closeness of the recovered Hessian.
If $R$ is a good approximation to $H$ (but not necessarily convergent), then $C_{R+}/C_{R-} = \mathcal{O}(1)$ and the solution-dependent factor in the error bound is
\begin{equation}
   \Norm{{\det(H)}^{\frac{1}{d}} H}_{L^{\frac{d}{d+2}}(\Omega)}^{\frac{1}{2}}.
   \label{eq:factor:1}
\end{equation}
On the other hand, if $R$ is not a good approximation to $H$, solution-dependent factor in the error bound will be larger.
For example, consider $R = I$ (the identity matrix), which leads to the uniform mesh refinement.
In this case the condition \cref{eq:CRs} is satisfied with
\[
   C_{R+} = C_{R+,K}
      = \max_{\bx\in \Omega} \lambda_{\max} \bigl(H(\bx)\bigr)
   \quad \text{and} \quad
   C_{R-} 
      = \min_{\bx\in \Omega} \lambda_{\min} \bigl(H(\bx)\bigr),
\]
where $\lambda_{\max} \bigl(H(\bx)\bigr)$ and $\lambda_{\min} \bigl(H(\bx)\bigr)$ denote the maximum and minimum eigenvalues of $H(\bx)$, respectively.
Thus, for $R=I$ the solution-dependent factor in the bound \cref{eq:thm:H1} becomes 
\[
   \frac{\max_{\bx\in \Omega} \lambda_{\max} \bigl(H(\bx)\bigr)}
      { \min_{\bx\in \Omega} \lambda_{\min} \bigl(H(\bx)\bigr)}
   \Norm{{\det(H)}^{\frac{1}{d}} H}_{L^{\frac{d}{d+2}}(\Omega)}^{\frac{1}{2}},
\]
which is obviously larger than \cref{eq:factor:1}.

Next, we study the relation between \cref{eq:CRs} and \cref{eq:AgLiVa99:1}--\cref{eq:AgLiVa99:2}.
In practical computation, the Hessian is typically recovered at mesh nodes (see~\cref{sect:recovery:methods}) and a recovered Hessian can be considered on the whole domain as a piecewise linear matrix-valued function.
In this case, the average $R_K$ of $R$ over any given element $K$ can be expressed as a linear combination of the nodal values of $R$.
Applying the triangle inequality to \cref{eq:AgLiVa99:1,eq:AgLiVa99:2} we get
\[
   \Norm{R(\bx_i)-H(\bx)}_\infty 
      \le \left( \delta + \varepsilon \right) \lambda_{\min}(R_{\bx_i}),
      \quad \forall \bx \in \omega_i 
\]
and, since $R_K$ is a linear combination of $R(\bx_i)$, 
\[
   \Norm{R_K - H(\bx)}_\infty 
      \leq \left(\delta + \varepsilon \right) \lambda_{\min} ( R_K ) .
\]

Since $R_K - H(\bx)$ is symmetric, $\Norm{R_K - H(\bx)}_2 \le \Norm{R_K - H(\bx)}_\infty$.
Thus, conditions \cref{eq:AgLiVa99:1,eq:AgLiVa99:2} imply
\begin{equation}
   \Norm{ R_K - H(\bx) }_2 
   \leq \left( \delta + \varepsilon \right) \lambda_{\min} (R_K),
   \quad \forall \bx \in K, \quad \forall K \in \cT_h
   \label{eq:R:H:eps}
\end{equation}
and
\begin{equation}
   \Norm{ R_K^{-1} H(\bx) - I }_2 \leq \left( \delta + \varepsilon \right), 
      \quad \forall \bx \in K, \quad \forall K \in \cT_h
   \label{eq:R:H:I}
\end{equation}
which in turn implies \cref{eq:CRs} with $C_{R+,K} = 1+ \left( \delta + \varepsilon \right)$ and $C_{R-} =  1 - \left( \delta + \varepsilon \right)$, if $\left(\delta + \varepsilon\right) < 1$.
Condition \cref{eq:R:H:I} and therefore \cref{eq:AgLiVa99:2} require the eigenvalues of $\RKinvH$ to stay closely around one.
On the other hand, condition \cref{eq:CRs} only requires the eigenvalues of $\RKinvH$ to be bounded from above and below from zero, which is weaker than \cref{eq:AgLiVa99:2}.
If $R$ converges to $H(\bx)$, both \cref{eq:AgLiVa99:2,eq:CRs} can be satisfied.
However, if $R$ does not converge to $H(\bx)$, as is the case for most adaptive computation, the situation is different.
As we shall see in~\cref{sect:examples}, condition \cref{eq:CRs} is satisfied for all of the examples tested whereas condition \cref{eq:AgLiVa99:2} is not satisfied by either of the examples.

We would like to point out that it is unclear if the considered monitor function \cref{eq:M:H1} (and the corresponding bound \cref{eq:thm:H1}) is optimal, although it seems to be the best we can get.
For example, if we choose the monitor function to be
\begin{equation}
   M_K = {\det(R_K)}^{- \frac{1}{d+4}} R_K, \quad \forall K \in \cT_h
   \label{eq:M:L}
\end{equation}
which is optimal for the $L^2$ norm~\cite{Hua05a}, the error bound becomes
\begin{equation}
   \Abs{u - u_h}_{H^1(\Omega)}
   \leq C 
      \cdot C_{ali}^{\frac{d+1}{2}} C_{eq}^{\frac{d+4}{4d}}
      \cdot \frac{C_{R+}}{C_{R-}}
      \cdot N^{-\frac{1}{d}} 
      \Norm{{\det(H)}^{\frac{1}{d}}}_{L^{\frac{2d}{d+4}}(\Omega)}^{\frac{2}{d+4}}
         \Norm{ {\det(H)}^{\frac{1}{d+4}} H }_{L^1(\Omega)}^{\frac{1}{2}}.
   \label{thm:error:H1:2}
\end{equation}
This bound has a larger solution-dependent factor than \cref{eq:thm:H1} since Hölder's inequality yields
\[
\Norm{ {\det(H)}^{\frac{1}{d}} H}_{L^{\frac{d}{d+2}}(\Omega)}^{\frac{1}{2}}
\le \Norm{{\det(H)}^{\frac{1}{d}}}_{L^{\frac{2d}{d+4}}(\Omega)}^{\frac{2}{d+4}}
         \Norm{ {\det(H)}^{\frac{1}{d+4}}  H}_{L^1(\Omega)}^{\frac{1}{2}} .
\]
It is worth mentioning that when the metric tensor \cref{eq:M:L} is used, the $L^2$ norm of the piecewise linear interpolation error is bounded by
\begin{equation}
   \Norm{u - \Pi_h u}_{L^2(\Omega)} 
      \leq C 
         \cdot C_{ali} C_{eq}^{\frac{d+4}{2 d}}      
         \cdot \frac{C_{R+}}{C_{R-}}
         \cdot N^{-\frac{2}{d}}
      \Norm{ {\det(H)}^{\frac{1}{d}}}_{L^{\frac{2d}{d+4}}(\Omega)} ,
   \label{eq:error:L2}
\end{equation}
which is optimal in terms of convergence order and solution-dependent factor, e.g.,\ see~\cite{CheSunXu07,HuaSun03}.

Note that \cref{thm:H1} holds for $u \in H^2(\Omega)$ although the estimate \cref{eq:thm:H1} only requires
\begin{equation}
   \Norm{ {\det(H)}^{\frac{1}{d}} H }_{L^{\frac{d}{d+2}}(\Omega)} < \infty.
\label{reg-1}
\end{equation}
Since
\[
   \Norm{ {\det(H)}^{\frac{1}{d}} H }_{L^{\frac{d}{d+2}}(\Omega)}
      \le
      \Norm{ \frac{1}{d} \tr(H) \cdot H }_{L^{\frac{d}{d+2}}(\Omega)} ,
\]
\cref{reg-1} can be satisfied when $u \in W^{2, \frac{2d}{d+2}}(\Omega)$.
Thus, there is a gap between the sufficient requirement $u\in H^2(\Omega)$ and the necessary requirement $u \in W^{2, \frac{2d}{d+2}}(\Omega)$.
The stronger requirement $u \in H^2(\Omega)$ comes from the estimation of the interpolation error in~\cite[Theorem~5.1.5]{HuaRus11}.
It is unclear to the authors whether or not this requirement can be weakened.

It is pointed out that $u \in H^2(\Omega)$ may not hold when $\partial \Omega$ is not smooth.
For example, in 2D, if $\partial \Omega$ has a corner with an angle $\omega \in (0,2\pi)$, the solution of the BVP \cref{eq:bvp-2} with smooth $f$ and $g$ basically has the following form near the corner,
\[
   u(r, \theta) = r^{\frac{\pi}{\omega}} u_0(\theta) + u_1(r, \theta),
\]
where $(r,\theta)$ denote the polar coordinates and $u_0(\theta)$ and $u_1(r, \theta)$ are some smooth functions.
Then,
\[
   \Abs{u}_{H^2(\Omega)}^2  
      \sim \int_0^{b} {\left( r^{\frac{\pi}{\omega}-2} \right)}^2 r \,dr
      \sim \left. r^{\frac{2 \pi}{\omega}-2} \right\vert_{0}^{b}
\]
for some constant $b>0$.
This implies that $u \notin H^2(\Omega)$ if $\omega > \pi$.
On the other hand, $W^{2, \frac{2d}{d+2}}(\Omega) = W^{2, 1}(\Omega)$ for $d = 2$ and
\[
   \Abs{u}_{W^{2,1}(\Omega)}^2  
      \sim \int_0^{b} \left( r^{\frac{\pi}{\omega}-2} \right) r \,dr \\
      \sim \left. r^{\frac{\pi}{\omega}} \right |_{0}^{b},
\]
which indicates that $u \in W^{2,1}(\Omega)$ for all $\omega \in (0,2\pi)$.

\section{Convergence of the linear finite element approximation for a general Hessian}
\label{sect:general}

In this section we consider the general situation where $H(\bx)$ is symmetric but not necessarily positive definite.
In this case, it is unrealistic to require the recovered Hessian $R$ to be positive definite. 
Thus, we cannot use $R$ directly to define the metric tensor which is required to be positive definite. 
A commonly used strategy is to replace $R$ by $\Abs{R} = \sqrt{R^2}$ since $\Abs{R}$ retains the eigensystem of $R$.
However, $\Abs{R}$ can become singular locally.
To avoid this difficulty, we regularize $\Abs{R}$ with a regularization parameter $\alpha_h > 0$ (to be determined).

From \cref{eq:M:H1}, we define the regularized metric tensor as
\begin{equation}
   M_K = {\det(\alpha_h I + \Abs{R_K})}^{- \frac{1}{d+2}} 
      \Norm{\alpha_h I + \Abs{R_K} }_2^{\frac{2}{d+2}}
         \left(\alpha_h I + \Abs{R_K}\right),
   \quad \forall K \in \cT_h,
   \label{eq:M:H1:2}
\end{equation}
and obtain the following theorem with a proof similar to that of \cref{thm:H1}.

\begin{theorem}[General Hessian]
\label{thm:H1:general}
For a given positive parameter $\alpha_h > 0$, we assume that the recovered Hessian $R$ satisfies
\begin{align}
   & C_{R-,K} I \le  {\left( \alpha_h I + \Abs{R_K} \right)}^{-1} \left(\alpha_h I + \Abs{H(\bx)} \right), 
      \quad \forall \bx \in K,
      \quad \forall K \in \cT_h,
   \label{eq:CRminus:general}
   \\
   & {\left( \alpha_h I + \Abs{R_K} \right)}^{-1} \Abs{H(\bx)}  
      \leq C_{R+,K} I, 
      \quad \forall \bx \in K,
      \quad \forall K \in \cT_h,
   \label{eq:CRplus:general}
\end{align}
where $C_{R-,K}$ and $C_{R+,K}$ are element-wise constants satisfying  \cref{CR+}.
If the solution of the BVP \cref{eq:bvp-2} is in $H^2(\Omega)$, then for any quasi-$M$-uniform mesh associated with metric tensor \cref{eq:M:H1:2} and satisfying \cref{eq:equi:approx,eq:ali:approx} the linear finite element error for the BVP is bounded by
\begin{equation}
   \Abs{u - u_h}_{H^1(\Omega)} 
      \leq C
      \cdot C_{ali}^{\frac{d+1}{2}} C_{eq}^{\frac{d+2}{2d}}
      \cdot \frac{C_{R+}}{C_{R-}}
      \cdot N^{-\frac{1}{d}}
      \Norm{ {\det(\alpha_h I + \Abs{H})}^{\frac{1}{d}} 
         \left(\alpha_h I + \Abs{H}\right)
         }_{L^{\frac{d}{d+2}}(\Omega)}^{\frac{1}{2}}.
   \label{eq:thm:H1:general}
\end{equation}
\end{theorem}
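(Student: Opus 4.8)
The plan is to run the proof of \cref{thm:H1} essentially verbatim, with every occurrence of the averaged recovered Hessian $R_K$ replaced by the positive definite matrix $\alpha_h I + \Abs{R_K}$, and with $H(\bx)$ replaced by $\Abs{H(\bx)} = \sqrt{{H(\bx)}^2}$ wherever it enters the anisotropic interpolation estimate. The first observation that makes this possible is that the regularized metric tensor \cref{eq:M:H1:2} is exactly the $H^1$ metric \cref{eq:M:H1} with $R_K \to \alpha_h I + \Abs{R_K}$; consequently the quasi-$M$-uniformity conditions \cref{eq:equi:approx,eq:ali:approx} specialize to the analogues of \cref{eq:equi:H1,eq:ali:H1} (again with $R_K \to \alpha_h I + \Abs{R_K}$), and the inverse alignment inequality \cref{eq:ali:inverse} follows from the same eigenvalue argument given there without change. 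I would then start, as before, from C\'ea's lemma \cref{eq:cea-1} and the element-wise interpolation bound \cref{eq:HR11:1}, which is stated directly in terms of $\Abs{H(\bx)}$ and therefore needs no sign condition on $H$.

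The technical core is the single element-wise estimate in which the recovered Hessian is swapped in. Writing $S_K := \alpha_h I + \Abs{R_K}$, the factorization $\Abs{H(\bx)}^{\frac{1}{2}}F_K' = \bigl(\Abs{H(\bx)}^{\frac{1}{2}}S_K^{-\frac{1}{2}}\bigr)\bigl(S_K^{\frac{1}{2}}F_K'\bigr)$ used in \cref{thm:H1} gives
\[
   \Norm{{(F_K')}^T \Abs{H(\bx)} F_K'}_2
      \le \lambda_{\max}\!\bigl(S_K^{-1}\Abs{H(\bx)}\bigr)\,\Norm{{(F_K')}^T S_K F_K'}_2
      \le C_{R+,K}\,\Norm{{(F_K')}^T S_K F_K'}_2,
\]
where the last step is precisely the upper closeness assumption \cref{eq:CRplus:general}; likewise $\Norm{{(F_K')}^{-1}}_2^2 \le \Norm{{(F_K')}^{-T}S_K^{-1}{(F_K')}^{-1}}_2\,\Norm{S_K}_2$. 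From here the computation is a transcription of the proof of \cref{thm:H1}: apply \cref{eq:ali:approx}, the inverse alignment, \cref{eq:CRplus:general} again inside the integral, then \cref{eq:equi:approx} and the averaging bound \cref{CR+}, to reach
\[
   \Abs{u-\Pi_h u}_{H^1(\Omega)}^2
      \le C\,C_{ali}^{d+1}C_{eq}^{\frac{d+2}{d}}N^{-\frac{2}{d}}C_{R+}^2
         \Biggl(\sum_K\int_K {\det(S_K)}^{\frac{1}{d+2}}\Norm{S_K}_2^{\frac{d}{d+2}}\dx\Biggr)^{\frac{d+2}{d}}.
\]
Finally I would convert $S_K$ back to $\alpha_h I + \Abs{H(\bx)}$ using the lower closeness assumption \cref{eq:CRminus:general}, which yields $\det(S_K)\le C_{R-}^{-d}\det(\alpha_h I+\Abs{H(\bx)})$ and $\Norm{S_K}_2\le C_{R-}^{-1}\Norm{\alpha_h I+\Abs{H(\bx)}}_2$ in exactly the way \cref{eq:detR:detH,eq:detR:detH-1} follow from \cref{eq:CRs}, and collect constants to obtain \cref{eq:thm:H1:general}.

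The one point that needs genuine attention — and the only place the general case departs from a mechanical copy — is matching the two halves of the closeness hypothesis to the two places they are used: \cref{eq:CRplus:general}, which carries only the bare $\Abs{H(\bx)}$, is exactly what is needed at the interpolation-to-metric step because \cref{eq:HR11:1} contains $\Abs{H(\bx)}$ and not a regularized version, while \cref{eq:CRminus:general}, which carries the full $\alpha_h I+\Abs{H(\bx)}$, is what is needed at the determinant-and-norm conversion step because that regularized matrix is what must survive into the final solution-dependent factor. One should also note that $\Abs{H(\bx)}$ is only positive semidefinite whereas $\alpha_h I+\Abs{R_K}$ is positive definite for every $\alpha_h>0$, so all the square-root and $\lambda_{\max}$ manipulations are legitimate; the value of $\alpha_h$ plays no role in deriving the bound itself and is constrained only by the requirement that \cref{eq:CRminus:general,eq:CRplus:general} hold and that $\Norm{{\det(\alpha_h I+\Abs{H})}^{\frac{1}{d}}(\alpha_h I+\Abs{H})}_{L^{\frac{d}{d+2}}(\Omega)}$ stay controlled.
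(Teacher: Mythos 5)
Your proposal is correct and is essentially the proof the paper intends: the paper itself only remarks that \cref{thm:H1:general} follows ``with a proof similar to that of \cref{thm:H1},'' and your substitution $R_K \to \alpha_h I + \Abs{R_K}$, $H \to \Abs{H}$ carries that out faithfully, including the correct matching of \cref{eq:CRplus:general} (bare $\Abs{H}$) to the interpolation step and \cref{eq:CRminus:general} (regularized $\alpha_h I + \Abs{H}$) to the analogues of \cref{eq:detR:detH,eq:detR:detH-1}. No gaps.
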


From \cref{eq:CRplus:general,eq:CRminus:general,eq:thm:H1:general} we see that the greater $\alpha_h$ is, the easier the recovered Hessian satisfies \cref{eq:CRplus:general,eq:CRminus:general}; however, the error bound increases as well.
For example, consider the extreme case of $\alpha_h \to \infty$.
In this case, \cref{eq:CRplus:general,eq:CRminus:general} can be satisfied with $C_{R+} = C_{R-} = 1$ for any $R$.
At the same time, the metric tensor defined in \cref{eq:M:H1:2} has an asymptotic behavior $M_K \to \alpha_h^{\frac{4}{d+4}} I$ and the corresponding $M$-uniform mesh is a uniform mesh.
Obviously, the right-hand side of \cref{eq:thm:H1:general} is large for this case.
Another extreme case is $\alpha_h \to 0$ where \cref{eq:thm:H1:general} reduces to \cref{eq:thm:H1} if both $R$ and $H(\bx)$ are positive definite.

We now consider the choice of $\alpha_h$.
We define a parameter $\alpha$ through the implicit equation 
\begin{equation}
   \Norm{ \sqrt[d]{\det(\alpha I + \Abs{H})} 
      \cdot (\alpha I + \Abs{H}) }_{L^{\frac{d}{d+2}}(\Omega)}
   = 2 \Norm{ \sqrt[d]{\det(\Abs{H})} \cdot H}_{L^{\frac{d}{d+2}}(\Omega)}  .
   \label{eq:alpha-3}
\end{equation}
The left-hand-side term is an increasing function of $\alpha$.
Moreover, the term is equal to the half of the right-hand-side term when $\alpha = 0$ and tends to infinity as $\alpha \to \infty$. 
Thus, from the intermediate value theorem we know that \cref{eq:alpha-3} has a unique solution $\alpha > 0$ if $\Norm{ \sqrt[d]{\det(\Abs{H})} \cdot H}_{L^{\frac{d}{d+2}}(\Omega)} > 0$.
If we choose $\alpha_h = \alpha$, then the finite element error is bounded by
\begin{equation}
   \Abs{u - u_h}_{H^1(\Omega)} 
   \leq C 
      \cdot C_{ali}^{\frac{d+1}{2}} C_{eq}^{\frac{d+2}{2d}}
      \cdot \frac{C_{R+}}{C_{R-}}
      \cdot 2N^{-\frac{1}{d}}
      \Norm{ \sqrt[d]{\det(\Abs{H})} \cdot H
         }_{L^{\frac{d}{d+2}}(\Omega)}^{\frac{1}{2}},
   \label{eq:thm:H1:general-2}
\end{equation}
which is essentially the same as \cref{eq:thm:H1}.
Note that \cref{eq:alpha-3} is impractical since it requires the prior knowledge of $H(\bx)$.
In practice it can be replaced by
\begin{equation}
   \sum_K \Abs{K} {\det\left( \alpha_h I + \Abs{R_K}\right)}^{\frac{1}{d+2}} 
   	\Norm{\alpha_h I + \Abs{R_K}}_2^{\frac{d}{d+2}}
   = 2^{\frac{d}{d+2}} \sum_K \Abs{K}
      {\det(\Abs{R_K})}^{\frac{1}{d+2}}\Norm{R_K}_2^{\frac{d}{d+2}} .
   \label{alpha-2}
\end{equation}
This equation can be solved effectively using the bisection method.
Numerical results show that $\alpha_h$ is close to $\alpha$ (\cref{fig:tanh:alpha}).

\section{A selection of commonly used Hessian recovery methods}
\label{sect:recovery:methods}

In this section we give a brief description of four commonly used Hessian recovery algorithms for two-dimensional mesh adaptation.
The interested reader is referred to~\cite{Kam09,ValManDomDufGui07} for a more detailed description of these Hessian recovery techniques. 

Recall that the goal of the Hessian recovery in the current context is to find an approximation of the Hessian in mesh nodes using the linear finite element solution $u_h$.
The approximation of the Hessian on an element is calculated as the average of the nodal approximations of the Hessian at the vertices of the element.

\subsection*{QLS:\ quadratic least squares fitting to nodal values}
This method involves the fitting of a quadratic polynomial to nodal values of $u_h$ at a selection of neighboring nodes in the least square sense and subsequent differentiation.
The original purpose of the QLS was the gradient recovery (e.g.,\ see Zhang and Naga~\cite{ZhaNag05}).
However, it is easily adopted for the Hessian recovery by simply differentiating the fitting polynomial twice.
 
More specifically, for a given node (say $\bx_0$) at least five neighboring nodes are selected.
A quadratic polynomial (denoted by $p$) is found by least squares fitting to the values of $u_h$ at the selected nodes.
The linear system associated with the least squares problem usually has full rank and a unique solution.
If it does not, additional nodes from the neighborhood of $\bx_0$ are added to the selection until the system has full rank.
An approximation to the Hessian of the solution $u$ at $\bx_{0}$ is defined as the Hessian of $p$, viz.,
\[
   R^{QLS}(\bx_{0}) = H(p)(\bx_{0}) .
\]
\subsection*{DLF:\ double linear least squares fitting}
\label{ssect:DLF}

The DLF method computes the Hessian by using linear least squares fitting twice. 
First, the least squares fitting of the nodal values of $u_h$ in a neighbourhood of $\bx_0$ is employed to find a linear fitting polynomial $p$.
The recovered gradient of function $u$ at $\bx_0$ is defined as the gradient of $p$ at $\bx_{0}$, i.e., 
\[
   \nabla_h^{DLF} u(\bx_{0}) = \nabla p(\bx_{0}).
\]
Second-order derivatives are then obtained by subsequent application of this linear fitting to the calculated first order derivatives. 
Mixed derivatives are averaged in order to obtain a symmetric recovered Hessian.

\subsection*{LLS:\ linear least squares fitting to first-order derivatives}

This method is similar to DLF except that the first-order derivatives at nodes are calculated in a different way.
In this method, the first-order derivatives are first calculated at element centers and then at nodes by linear least squares fitting to their values at element centers.

\subsection*{WF:\ weak formulation}

This approach recovers the Hessian by means of a variational formulation~\cite{Dol98}.
More specifically, let $\phi_0$ be a canonical piecewise linear basis function at node $\bx_0$. 
Then the nodal approximation $u_{xx,h}$ to the second-order derivative $u_{xx} $ at $\bx_i$ is defined through 
\[
 u_{xx,h}(\bx_0) \int_\Omega \phi_0(\bx) \dx 
   = -\int_\Omega \frac{\partial u_h}{\partial x} 
      \frac{\partial \phi_0}{\partial x} \dx .
\]
The same approach is used to compute $u_{xy,h}$ and $u_{yy,h}$.
Since $\phi_0$ are piecewise linear and vanish outside the patch associated with $\bx_0$, the involved integrals can be computed efficiently with appropriate quadrature formulas over a single patch.

\section{Numerical examples}
\label{sect:examples}

In this section we present two numerical examples to verify the analysis given in the previous sections.
We use \bamg{}~\cite{bamg} to generate adaptive meshes as quasi-$M$-uniform meshes for the regularized metric tensor \cref{eq:M:H1:2}.
Special attention will be paid to mesh conditions \cref{eq:equi:approx,eq:ali:approx} and closeness conditions \cref{eq:AgLiVa99:2,eq:CRplus:general,eq:CRminus:general}.

For the recovery closeness condition \cref{eq:AgLiVa99:2} we compare the regularized recovered and exact Hessians, i.e.,\ we compute $\varepsilon$ for
\begin{equation}
   \Norm{(\alpha_h I + \Abs{R_K}) - (\alpha I + \abs{H_K}) }_\infty 
   \leq \varepsilon \lambda_{\min} (\alpha_h I + \Abs{R_K}),
   \qquad \forall K \in \cT_h,
   \label{eq:eps:regularized}
\end{equation}
where $H_K$ is an average of the exact Hessian on the element $K$ and $\alpha_h$ and $\alpha$ are the is the regularization parameters for the recovered and the exact Hessians, respectively.

\vspace{0.5ex}
\begin{example}[{\cite[Example~4.3]{Hua05a}}]
\label{ex:flower}
\normalfont{}
The first example is in the form of BVP~\cref{eq:bvp} with $f$ and $g$ chosen such that the exact solution is given by
\begin{align*}
   u(x,y) =& 
      \tanh \left[ 30 \left(   x^2       + y^2         - 0.125 \right) \right] \\
   &+ \tanh \left[ 30 \left( {(x-0.5)}^2 + {(y-0.5)}^2 - 0.125 \right) \right] \\
   &+ \tanh \left[ 30 \left( {(x-0.5)}^2 + {(y+0.5)}^2 - 0.125 \right) \right] \\
   &+ \tanh \left[ 30 \left( {(x+0.5)}^2 + {(y-0.5)}^2 - 0.125 \right) \right] \\
   &+ \tanh \left[ 30 \left( {(x+0.5)}^2 + {(y-0.5)}^2 - 0.125 \right) \right]
    .
\end{align*}

A typical plot of element-wise constants $C_{eq,K}$ and $C_{ali,K}$ in mesh quasi-$M$-uniformity conditions \cref{eq:equi:approx,eq:ali:approx} is shown in \cref{fig:flower:ceq,fig:flower:cali}, demonstrating that these conditions hold with relatively small $C_{eq}$ and $C_{ali}$.
 For the given mesh example we have $0.5 \le C_{eq,K} \le 1.5$ and $1 \le C_{ali,K} \le 1.3$, which gives $C_{eq} = 1.5$ and $C_{ali} = 1.3$.
In fact, we found that $C_{eq} \le 2.0$ and $C_{ali} \le 2.1$ for all computations in this paper, indicating that \bamg{} does a good job in generating quasi-$M$-uniform meshes for a given metric tensor.

\Cref{fig:flower:epsk,fig:flower:eps} show a typical distribution of element-wise values of $\varepsilon$ in \cref{eq:eps:regularized} and its values for a sequence of adaptive grids.
We observe that for all methods $\varepsilon$ is not small with respect to one, which violates the condition \cref{eq:AgLiVa99:2}.

Typical element-wise values $C_{R+,K}/C_{R-}$ and values of $C_{R+}/C_{R-}$ for a sequence of adaptive grids are shown in \cref{fig:flower:crk,fig:flower:CR}.
Notice that $C_{R+} / C_{R-}$ stays relatively small and bounded, thus satisfying the closeness conditions \cref{eq:CRplus:general,eq:CRminus:general}.

For this example, the finite element error $\Abs{u-u_h}_{H^1(\Omega)}$ is almost undistinguishable for meshes obtained by means of the exact and recovered Hessian (\cref{fig:flower:error}) and the approximated $\alpha_h$, computed through \cref{alpha-2}, is very close the value for the exact Hessian (\cref{fig:flower:alpha}).

\begin{figure}[p]\centering
   \begin{subfigure}[t]{0.5\linewidth}
   \centering
   \includegraphics[clip]{{{1211.2877f-flower-ce}}}
      \caption{element-wise $C_{eq,K}$ for \cref{eq:equi:approx}\label{fig:flower:ceq}}
   \end{subfigure}%
   %
   \begin{subfigure}[t]{0.5\linewidth}
   \centering
   \includegraphics[clip]{{{1211.2877f-flower-ca}}}
   \caption{element-wise $C_{ali,K}$ for \cref{eq:ali:approx}\label{fig:flower:cali}}
\end{subfigure}\\[1em]
   %
   \begin{subfigure}[t]{0.5\linewidth}
   \centering
   \includegraphics[clip]{{{1211.2877f-flower-epsk}}}
      \caption{element-wise $\varepsilon$
         for \cref{eq:eps:regularized}\label{fig:flower:epsk}}
   \end{subfigure}%
   %
   \begin{subfigure}[t]{0.5\linewidth}
   \centering
   \includegraphics[clip]{{{1211.2877f-flower-crk}}}
      \caption{element-wise $C_{R+,K}/C_{R-}$\label{fig:flower:crk}}
   \end{subfigure}\\[1em]
   %
   \begin{subfigure}[t]{0.5\textwidth}
   \centering
   \includegraphics[clip]{{{1211.2877f-flower-eps}}}
      \caption{$\varepsilon$ for 
         \cref{eq:eps:regularized}\label{fig:flower:eps}}
   \end{subfigure}%
   %
   \begin{subfigure}[t]{0.5\textwidth}
   \centering
   \includegraphics[clip]{{{1211.2877f-flower-cr}}}
      \caption{$C_{R+}/C_{R-}$ for 
         \cref{eq:CRplus:general,eq:CRminus:general}\label{fig:flower:CR}}
   \end{subfigure}\\[1em]
   %
   \begin{subfigure}[t]{0.5\textwidth}
   \centering
   \includegraphics[clip]{{{1211.2877f-flower-fe-error}}}
      \caption{finite element error $\Abs{u-u_h}_{H^1(\Omega)}$\label{fig:flower:error}}
   \end{subfigure}%
   %
   \begin{subfigure}[t]{0.5\textwidth}
   \centering
   \includegraphics[clip]{{{1211.2877f-flower-alpha}}}
      \caption{comparison of $\alpha_h$ and $\alpha$\label{fig:flower:alpha}}
   \end{subfigure}%
   \caption{Numerical results for \cref{ex:flower}\label{fig:flower}}
\end{figure}
\end{example}

\vspace{0.5ex}
\begin{example}[Strong anisotropy]
\label{ex:tanh}
\normalfont{}
The second example is in the form of BVP~\cref{eq:bvp} with $f$ and $g$ chosen such that the exact solution is given by
\[
   u(x,y) = \tanh ( 60y ) - \tanh \bigl( 60 (x - y) - 30 \bigr).
\]
This solution exhibits a very strong anisotropic behavior and describes the interaction between a boundary layer along the $x$-axis and a steep shock wave along the line $y = x - 1/2$.

\Cref{fig:tanh:epsk,fig:tanh:epsilon} show that $\varepsilon \approx 60$
and therefore not small with respect to one, violating the condition \cref{eq:AgLiVa99:2} for all meshes in the considered range of $N$ for all four recovery techniques.

On the other hand, \cref{fig:tanh:CR} shows that the ratio $C_{R+} / C_{R-}$ is large ($\approx 10^2$) but, nevertheless, it seems to stay bounded with increasing $N$, confirming that \cref{eq:CRplus:general,eq:CRminus:general} are satisfied by the recovered Hessian. 
The fact that the ratio $C_{R+} / C_{R-}$ has different values in this and the previous examples indicates that the accuracy or closeness of the four Hessian recovery techniques depends on the behavior and especially the anisotropy of the solution.
Fortunately, as shown by \cref{thm:H1,thm:H1:general}, the finite element error is insensitive to the closeness of the recovered Hessian.
The finite element solution error is shown in \cref{fig:tanh:error} as a function of $N$.

Finally, \cref{fig:tanh:alpha} shows that $\alpha_h$, computed through \cref{alpha-2}, is close to the exact value $\alpha$ defined in \cref{eq:alpha-3}.

\begin{figure}[p]\centering
   %
   \begin{subfigure}[t]{0.5\linewidth}
   \centering
   \includegraphics[clip]{{{1211.2877f-tanh-ce}}}
      \caption{element-wise $C_{eq,K}$ for \cref{eq:equi:approx}\label{fig:tanh:ceq}}
   \end{subfigure}%
   %
   \begin{subfigure}[t]{0.5\linewidth}
   \centering
   \includegraphics[clip]{{{1211.2877f-tanh-ca}}}
   \caption{element-wise $C_{ali,K}$ for \cref{eq:ali:approx}\label{fig:tanh:cali}}
   \end{subfigure}\\[1em]
   %
   \begin{subfigure}[t]{0.5\linewidth}
   \centering
   \includegraphics[clip]{{{1211.2877f-tanh-epsk}}}
      \caption{element-wise $\varepsilon$
          for \cref{eq:eps:regularized}\label{fig:tanh:epsk}}
   \end{subfigure}%
   %
   \begin{subfigure}[t]{0.5\linewidth}
   \centering
   \includegraphics[clip]{{{1211.2877f-tanh-crk}}}
   \caption{element-wise $C_{R+,K}/C_{R-}$}
   \end{subfigure}\\[1em]
   \begin{subfigure}[t]{0.5\textwidth}
   \centering
   \includegraphics[clip]{{{1211.2877f-tanh-eps}}}
      \caption{$\varepsilon$
          for \cref{eq:eps:regularized}\label{fig:tanh:epsilon}}
   \end{subfigure}%
   %
   \begin{subfigure}[t]{0.5\textwidth}
   \centering
   \includegraphics[clip]{{{1211.2877f-tanh-cr}}}
      \caption{$C_{R+}/C_{R-}$ for 
         \cref{eq:CRplus:general,eq:CRminus:general}\label{fig:tanh:CR}}
   \end{subfigure}\\[1em]
   %
   \begin{subfigure}[t]{0.5\textwidth}
   \centering
   \includegraphics[clip]{{{1211.2877f-tanh-fe-error}}}
      \caption{finite element error $\Abs{u-u_h}_{H^1(\Omega)}$\label{fig:tanh:error}}
   \end{subfigure}%
   %
   \begin{subfigure}[t]{0.5\textwidth}
   \centering
   \includegraphics[clip]{{{1211.2877f-tanh-alpha}}}
      \caption{comparison of $\alpha_h$ and $\alpha$\label{fig:tanh:alpha}}
   \end{subfigure}%
   \caption{Numerical results for \cref{ex:tanh}\label{fig:tanh}}
\end{figure}
\end{example}

\section{Conclusion and further comments}
\label{sect:conclusion}

In the previous sections we have investigated how a nonconvergent recovered Hessian works in mesh adaptation.
Our main results are \cref{thm:H1,thm:H1:general} where an error bound for the linear finite element solution of BVP \cref{eq:bvp-2} is given for quasi-$M$-uniform meshes corresponding to a metric depending on a recovered Hessian.
As conventional error estimates for the $H^1$ semi-norm of the error in linear finite element approximations, our error bound is of first order in terms of the average element diameter, $N^{-\frac{1}{d}}$, where $N$ is the number of elements and $d$ is the dimension of the physical domain.
This error bound is valid under the closeness condition \cref{eq:CRs} (or \cref{eq:CRplus:general,eq:CRminus:general}), which is weaker than \cref{eq:AgLiVa99:2} used by Agouzal et al.~\cite{AgoLipVas99} and Vassilevski and Lipnikov~\cite{VasLip99}.
Numerical results in~\cref{sect:examples} show that the new closeness condition is satisfied by the recovered Hessian obtained with commonly used Hessian recovery algorithms.
The error bound also shows that the finite element error changes gradually with the closeness of the recovered Hessian to the exact one.
These results provide an explanation on how a nonconvergent recovered Hessian works in mesh adaptation.

In this work the closeness conditions \cref{eq:CRplus:general,eq:CRminus:general} have been verified only numerically.
Developing a theoretical proof of the condition for some Hessian recovery techniques is an interesting topic for further investigations.

\section*{Acknowledgment}
The authors are grateful to the anonymous referees for their comments and suggestions for improving the quality of this paper, particularly for the helpful comments on improving the proof of \cref{thm:H1}.


\end{document}